\newtheorem{theorem}{Theorem}
\theoremstyle{definition}
\newtheorem{definition}{Definition}
\newtheorem{proposition}{Proposition}
\begin{document}

\begin{center}
{\Large
	{\sc  Choice of mixture Poisson models based on Extreme value theory}
}
\medskip

 $\text{Samuel Valiquette}^{a,b,c,d}$ \& $\text{Frédéric Mortier}^{a}$ \& $\text{Jean Peyhardi}^b$ \& $\text{Gwladys Toulemonde}^{b,c}$  
\medskip

{\it
$^a$ CIRAD, UPR Forêts et Sociétés, F-34398 Montpellier, France.
Forêts et Sociétés, Univ Montpellier, CIRAD, Montpellier, France.\\
$^b$ IMAG, CNRS, Université de Montpellier, 34090, Montpellier, France.\\
$^c$ LEMON, Inria, 34095, Montpellier, France \\
$^d$ Université de Sherbrooke, Département de mathématiques, Sherbrooke, Canada, J1K 2R1\\

}
\end{center}
\medskip

{\bf Abstract.} 
Count data are omnipresent in many applied fields, often with overdispersion due to an excess of zeroes or extreme values.  With mixtures of Poisson distributions representing an elegant and appealing modelling strategy, we focus here on the challenging problem of identifying a suitable mixing distribution and study how extreme value theory can be used.  We propose an original strategy to select the most appropriate candidate among three categories: Fr\'echet, Gumbel and pseudo-Gumbel. Such an approach is presented with the aid of a decision tree and evaluated with numerical simulations.

{\bf Keywords.} Poisson mixture, Discrete extreme value theory, Peak-over-threshold.

\section{Introduction}
\label{Intro}
Count data are classically observed in many applied fields such as in actuarial science when evaluating risk and the pricing of insurance contracts \citep[e.g.,][]{Claims}, in genetics to model the number of genes involved in phenotype variability \citep[e.g.,][]{Genes} or in ecology to model species abundance  \citep[e.g.,][]{Abundance}.
While Poisson models and regression are well established choices for these type of data, they are not suitable for overdispersed data, which typically occur with an excess of zeroes or extreme large values.  
To overcome such limitations the use of Poisson mixture models has been proposed.
This assumes the Poisson's intensity is no longer an unknown fixed value, but a  positive random variable. 
Mixture approach induces overdispersed distributions with more zeroes and high values compared to the classical Poisson model \citep{Shaked}. 
A variety of mixture distributions has been already proposed \citep{Karlis}.
Classical examples include the gamma distribution \citep{Greenwood}, the lognormal \citep{Bulmer} or the Bernoulli \citep{Lambert}.
From a general point of view, any distribution with a non negative support, finite or not, can be a potential candidate as a mixing distributions.
Some tests exist to verify whether data are overdispersed \citep{YHA} or if they come from a mixed Poisson distribution \citep{Carriere}.
These tests justify the use of Poisson mixtures, but do not make claims on what type of mixing distribution should be selected.
To our knowledge, there are no studies that propose a solution to this problem.
This paper aims to propose a new strategy to select an appropriate and efficient mixing distribution family.
\medskip

Usually one may choose to fit some predetermined Poisson mixtures and keep the best model based on some criteria. 
However the following example shows the difficulties in the mixing distribution choice. 
A sample of size $n=500$ has been simulated from a Poisson-Beta type II with parameters equal to $a = 1$ and $b = 2.2$.
This choice ensures finite expectation and variance for the mixed Poisson distribution and are equal to $0.83$ and $8.47$ respectively.
Such a distribution has been used to model accident proneness \citep{Holla}.
First and as expected, the use of a simple Poisson model fails to properly fit data (see Figure \ref{fig:example}).
In particular, it does not capture well the high frequency of zeroes and large values once its parameter is estimated ($\hat{\lambda}= 1.02$).

\begin{figure}[H]
    \centering
    \includegraphics[width=\textwidth]{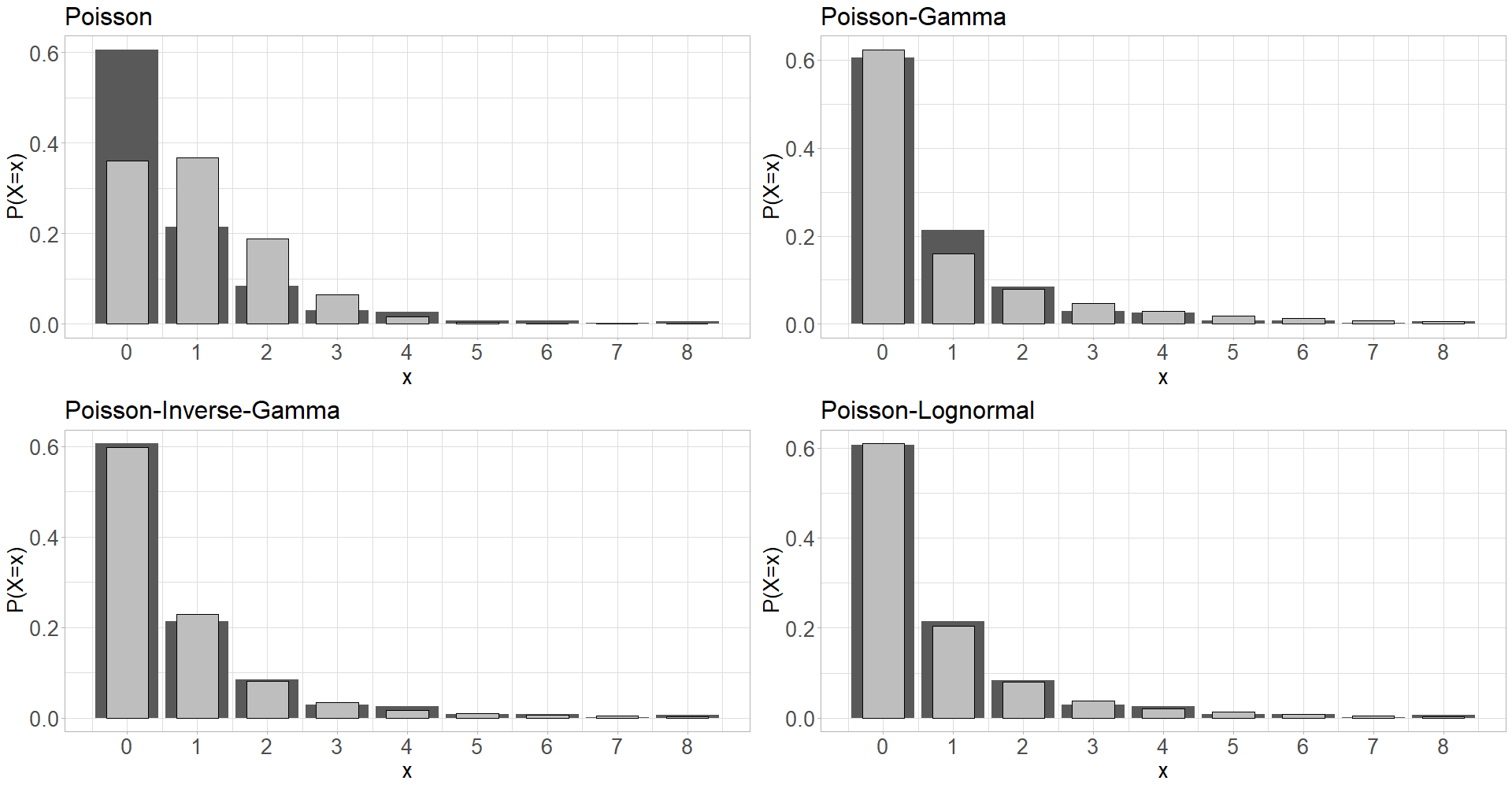}
    \caption{Fitted Poisson, Poisson-Gamma, Poisson-Inverse-Gamma and Poisson-Lognormal models to data simulated from a Poisson-Beta type II distribution with $a = 1$ and $b = 2.2$  compared to its empirical distribution (Dark Gray) }
    \label{fig:example}
\end{figure}

\pagebreak

Alternative latent distributions on $\lambda$ have been used for this simulation, either the gamma or lognormal as classically proposed in ecology \citep{JSDM} or the inverse-gamma applied in actuarial science for liability insurance claims \citep{Tzougas}.
Inference is performed in a Bayesian framework (details in Section \ref{Section_Mixture}.2).
While the Poisson-gamma, \textit{i.e.} negative binomial, or the Poisson-lognormal are popular choices, the Poisson-inverse-gamma is privileged for this example.
Indeed, the \textit{posterior} model probabilities of the negative binomial and Poisson-lognormal are respectively $0$\% and $38.8$\% compared to $61.2$\% for the Poisson-inverse-gamma.
The latter mixture distinguishes itself even if the three models behave similarly with regards to Figure \ref{fig:example}.\medskip

The reason that the inverse-gamma is privileged in this example is due to its tail behavior which is similar to the beta type II distribution.
We establish in this paper that an adequate choice for the distribution on $\lambda$ can be made by analysing such a property.
This paper is organized as follow. 
Section \ref{Section_Mixture} presents a classification of various Poisson mixtures based on their tail behavior using extreme value theory. 
Using these new classifications, we construct in Section \ref{Section_Strategy} a strategy to choose a family of distributions for $\lambda$. 
This strategy is presented in the form of a decision tree where each step leads to an adequate category of Poisson mixtures.
Simulations for each section are also presented to attest the relevance and usefulness of such an approach.

\section{Poisson mixture tail behavior}\label{Section_Mixture}

In this section, we present the fundamental results in extreme value theory and the restrictions when it comes to discrete distributions. 
Following this, we present three categories of Poisson mixtures that characterise different tail behaviors and conclude with simulations to assess the interest of selecting such a category.

\subsection{Theoretical foundations and Poisson mixtures categories}

The tail behavior of a distribution can be studied using extreme value theory.
Such a statistical approach analyses how the maximum of a distribution $F$ stabilizes.
The theory says that $F$ belongs to a max domain of attraction if it exists two normalizing sequences $a_n > 0$ and $b_n$ such that $F^n(a_n x - b_n)$ converges to a non-degenerate distribution when $n \to \infty$  \citep{Resnick}.
There are three possible domains of attraction named Weibull, Gumbel and Fréchet. 
These domains describe the asymptotic tail behavior of $F$ and correspond respectively to finite, exponential and heavy tailed distributions. 
In the sequel, they will be denoted by $\mathcal{D}_-$, $\mathcal{D}_0$ and $\mathcal{D}_+$, and we will write this property by $F \in \mathcal{D}$ where $\mathcal{D}$ is one of the three domains.
In the following, we assume that the mixing distribution $\lambda$ is supported on $\mathbb{R}_+$, then only $\mathcal{D}_0$ and $\mathcal{D}_+$ are considered.
Usual continuous distributions belong to a domain, this is not the case for discrete random variables.
Indeed, a necessary condition for a discrete distribution $F$ to be in a domain of attraction is the property of long tailed \citep{Anderson} defined by:

\begin{equation}
    \lim_{n \to \infty} \frac{1-F(n+1)}{1-F(n)} = 1.
\end{equation}
In particular well known discrete distributions do not satisfy this property such as among others Poisson, negative binomial or yet geometric distributions.
Even though the latter two are Poisson mixtures with a gamma distributed mixing parameter, which belongs to $\mathcal{D}_0$, the domain of attraction is not carried over by the  mixture distribution. 
However \citet{Anderson} and \citet{Shimura} showed that if a discrete distribution verifies 
\begin{equation}
    \lim_{n \to \infty} \frac{1-F(n+1)}{1-F(n)} = L \in (0,1)
\end{equation}
then $F$ is, in a sense, 'close' to the Gumbel domain.
More precisely, \citet{Shimura} showed that property (2) implies that $F$ is the discretization of a unique continuous distribution belonging to $\mathcal{D}_0$.
On the other hand, \cite{Anderson} showed that there is a sequence $b_n$ such that $F^n(x+b_n)$ has infimum and supremum limits bounded by two different Gumbel distributions, implying that $F$ is not far from this domain.\bigskip 

Because Poisson mixture distributions are discrete distributions, they are constrained to the long tailed property in order to have a domain. 
Otherwise, they may be close to the Gumbel domain.
But Poisson mixtures are uniquely identifiable by the distribution on $\lambda$ \citep{Feller}, this means that its tail behavior is dependent of the latter.
Therefore, we need to understand conditions on $\lambda$ that allow Poisson mixture distribution to inherit a domain or to be close to the Gumbel one.
\cite{Perline} established some conditions for such preservation.
From this point forward, we note by $F$ and $f$ the cumulative
distribution function (cdf) and the probability density function (pdf) for $\lambda$ and $F_M$ the cdf of the resulting Poisson mixture.
Firstly, if $F \in \mathcal{D}_+$ and is such that $\lim_{x \to \infty} \frac{x f(x)}{1-F(x)} = \alpha$ for some $\alpha > 0$ (1\textsuperscript{st} Von Mises condition), then $F_M \in \mathcal{D}_+$.
Secondly, if $F\in \mathcal{D}_0$, $\lim_{x \to \infty} \frac{d}{dx}\left[ \frac{1-F(x)}{f(x)}  \right] = 0$ (3\textsuperscript{rd} Von Mises condition) and $\frac{f(x)}{1-F(x)} = o(x^{-\delta})$ as $x \to \infty$ for some $\delta \geq \frac{1}{2}$, then $F_M \in \mathcal{D}_0$.\bigskip

These results clarify some conditions for which domain of attraction of the mixing distribution is propagated to the associated Poisson mixture distribution.
Very naturally, we denote these situations by two categories of Poisson mixtures: \textbf{Fréchet} and \textbf{Gumbel}.
A broad set of distributions satisfies the 1\textsuperscript{st} Von Mises condition, for example the Fréchet, folded-Cauchy, beta type II, the inverse-gamma or the gamma/beta type II mixture \citep{Irwin}.
Unfortunately, examples are scarce for the Gumbel domain.
Indeed the extra condition on the hazard function is quite restrictive.
Some examples are the lognormal, the Benktander type I and II \citep{Benktander} or the Weibull distribution, with further restrictions on the parameters for the latter two cases.
The Gumbel category does not encompass cases like the Poisson-gamma; while the mixing distribution belongs to $\mathcal{D}_0$, it does not satisfy the additional condition on the hazard function. 
In order to categorise such Poisson mixtures, we study distributions on $\lambda$ that behave like the gamma.

\begin{definition}[\citet{Willmot}]
A density $f$ is a \textbf{gamma type} if 
\begin{equation}\label{gamma_type}
    \lim_{x \to \infty} \frac{f(x)}{C(x)x^{\alpha} e^{-\beta x}} = 1
\end{equation} 
where $C(x)$ is a locally bounded function on $\mathbb{R}_+$ and slowly varying, i.e. $\lim_{t \to \infty} \frac{C(tx)}{C(t)} = 1$ for every $x \in \mathbb{R}_+$ (see \cite{Bingham}), $\alpha \in \mathbb{R}$ and $\beta > 0$.
\end{definition}

 \noindent Using gamma type distributions in the Poisson mixture context allows us to extend the categorisation to cases where $F \in \mathcal{D}_0$, but $F_M \not\in \mathcal{D}_0$.
First, we prove that such a mixing distribution belongs to $\mathcal{D}_0$ (see Proposition \ref{prop1}). Finally Theorem \ref{thm::valiquette} establishes  that $F_M \not\in \mathcal{D}_0$  and quantifies the closeness to the Gumbel domain.\bigskip

\begin{proposition}\label{prop1}
If $F$ is a gamma type distribution, then $F \in \mathcal{D}_0$.
\end{proposition}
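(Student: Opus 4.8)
The plan is to use the classical characterization of the Gumbel max-domain of attraction. Writing $\overline{F}:=1-F$, one has $F\in\mathcal{D}_0$ as soon as there is a positive function $a(\cdot)$ such that $\lim_{x\to\infty}\overline{F}\bigl(x+t\,a(x)\bigr)/\overline{F}(x)=e^{-t}$ for every $t\in\mathbb{R}$ (see \citet{Resnick}). The gamma type hypothesis makes the tail $\overline{F}$ explicit enough that this can be verified with the \emph{constant} auxiliary function $a\equiv 1/\beta$, so the proof reduces to establishing the tail equivalence
\[
\overline{F}(x)\ \sim\ \frac{1}{\beta}\,C(x)\,x^{\alpha}\,e^{-\beta x},\qquad x\to\infty,
\]
and then translating it by the bounded shift $t/\beta$.

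First I would prove this tail equivalence, which is really the only delicate point. Write $f(x)=(1+\varepsilon(x))\,C(x)x^{\alpha}e^{-\beta x}$ with $\varepsilon(x)\to 0$, so that $\overline{F}(x)=\int_x^\infty C(u)u^{\alpha}e^{-\beta u}\,du+\int_x^\infty\varepsilon(u)\,C(u)u^{\alpha}e^{-\beta u}\,du$. In the leading integral substitute $u=x+s$ and factor out $C(x)x^{\alpha}e^{-\beta x}$, which leaves $\int_0^\infty \tfrac{C(x+s)}{C(x)}\bigl(1+\tfrac{s}{x}\bigr)^{\alpha}e^{-\beta s}\,ds$. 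The integrand tends pointwise to $e^{-\beta s}$ (slow variation of $C$ together with $(1+s/x)^{\alpha}\to 1$) and, by Potter's bounds applied to $C$, is dominated for all large $x$ by a fixed multiple of $(1+s)^{|\alpha|+\delta}e^{-\beta s}\in L^1(0,\infty)$; dominated convergence then yields $\int_x^\infty C(u)u^{\alpha}e^{-\beta u}\,du\sim\beta^{-1}C(x)x^{\alpha}e^{-\beta x}$. The remainder integral is bounded in absolute value by $\bigl(\sup_{u\ge x}|\varepsilon(u)|\bigr)$ times the leading one, hence negligible, and the claimed equivalence follows; in particular $\overline{F}(x)\sim f(x)/\beta$. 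The local boundedness of $C$ plays no role here beyond making $f$ a genuine density: everything rests on Potter's bounds and the uniform convergence theorem for slowly varying functions (see \citet{Bingham}), which is why I regard this step as the main obstacle.

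It then remains to verify the domain condition. Fix $t\in\mathbb{R}$; by the tail equivalence,
\[
\frac{\overline{F}(x+t/\beta)}{\overline{F}(x)}\ \sim\ \frac{C(x+t/\beta)}{C(x)}\,\Bigl(1+\frac{t}{\beta x}\Bigr)^{\alpha}e^{-t}\ \xrightarrow[x\to\infty]{}\ e^{-t},
\]
because $(1+t/(\beta x))^{\alpha}\to 1$ and $C(x+t/\beta)/C(x)\to 1$; the latter again follows from the uniform convergence theorem, writing $x+t/\beta=x\bigl(1+t/(\beta x)\bigr)$ with $1+t/(\beta x)$ eventually confined to a compact subset of $(0,\infty)$. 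Taking $a\equiv 1/\beta$, this is exactly the characterization recalled above, whence $F\in\mathcal{D}_0$. An alternative route, when $f$ is additionally assumed eventually differentiable, would be to check the 3\textsuperscript{rd} von Mises condition, but this imposes regularity not granted by the definition and still requires controlling $\tfrac{d}{dx}\bigl[\overline{F}(x)/f(x)\bigr]$, so I prefer the argument above.
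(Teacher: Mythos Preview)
Your argument is correct and reaches the same target as the paper --- the exponential-tail relation $\overline{F}(x+k)/\overline{F}(x)\to e^{-\beta k}$ (equivalently, your $\overline{F}(x+t/\beta)/\overline{F}(x)\to e^{-t}$) --- but by a genuinely different route. The paper bypasses your integration step entirely: it applies L'H\^opital's rule to send $\overline{F}(x+k)/\overline{F}(x)$ straight to $f(x+k)/f(x)$, plugs in the gamma-type asymptotic, and then proves $C(x+k)/C(x)\to 1$ via the Karamata representation of $C$. Your approach instead first establishes the tail equivalence $\overline{F}(x)\sim\beta^{-1}C(x)x^{\alpha}e^{-\beta x}$ through dominated convergence and Potter's bounds, and handles $C(x+t/\beta)/C(x)\to 1$ with the uniform convergence theorem. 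The paper's proof is noticeably shorter; yours is a bit heavier but delivers the explicit tail asymptotic of $\overline{F}$ (equivalently, $f(x)/\overline{F}(x)\to\beta$) as a free byproduct, which can be useful in its own right even though it is not needed for the proposition.
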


\begin{proof}
Let $\overline{F}$ be the survival function of a gamma type distribution. A sufficient condition for $F \in \mathcal{D}_0$ is to show that $F$ has an exponential tail, i.e. $\lim_{x\to \infty} \overline{F}(x+k)/\overline{F}(x) = e^{-\beta k}$ where $k \in \mathbb{R}$ \citep{Shimura}.
    Using L'Hôpital's rule and equation (\ref{gamma_type}), the limit becomes
    $$\lim_{x\to \infty} \frac{\overline{F}(x+k)}{\overline{F}(x)} = e^{-\beta k} \lim_{x \to \infty} \frac{C(x+k)}{C(x)}.$$
    It remains to show that the latter limit is equal to $1$.
    Because $C(\cdot)$ is slowly varying, we can use the Karamata representation
    $$C(x) = c(x) \exp\left( \int_1^x t^{-1} \eta(t) dt \right),$$ where $c(\cdot)$ and $\eta(\cdot)$ are both functions from $\mathbb{R}_+$ to $\mathbb{R}_+$, $\lim_{x \to \infty} c(x) = c > 0$ and $\lim_{x \to \infty} \eta(x) = 0$ \citep{Bingham}.
    Then the limit equals
    $$\lim_{x \to \infty} \frac{C(x+k)}{C(x)} = \lim_{x \to \infty} \exp \left( \int_x^{x+k} t^{-1} \eta(t) dt \right).$$
    Because $\eta(x) \to 0$, for any $\varepsilon > 0$ then $0 < \eta(x) < \varepsilon$ for $x$ large enough. Then $$0 <  \int_x^{x+k} t^{-1} \eta(t) dt < \varepsilon  \int_x^{x+k} t^{-1} dt = \varepsilon \log\left( \frac{x+k}{x} \right) < \varepsilon \log(1 + k)$$ which implies the limit is equal to $1$ and establishes the sufficient condition.
\end{proof}\bigskip

\begin{theorem}
Let $F_M$ be a Poisson mixture with $\lambda$ distributed according to a gamma type distribution $F$. Then for any integer $k \geq 1$, $\lim_{n \to \infty} \frac{1-F_M(n+k)}{1-F_M(n)} = \left(1+\beta \right)^{-k} \in (0,1)$. In particular, $F_M$ is not long tailed ($k=1$).
\label{thm::valiquette}
\end{theorem}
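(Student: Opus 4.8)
\noindent\emph{Proof strategy.} The plan is to turn the tail ratio of the discrete mixture into a ratio of Laplace-type integrals involving only the survival function $\overline F=1-F$ of $\lambda$, and then evaluate those integrals asymptotically using the gamma-type form of $f$. The first step is the integral representation
$$1-F_M(n)\;=\;\frac{1}{n!}\int_0^\infty t^n e^{-t}\,\overline F(t)\,dt.$$
This follows by writing $1-F_M(n)=\sum_{j>n}\int_0^\infty e^{-\lambda}\lambda^j/j!\,f(\lambda)\,d\lambda$, interchanging sum and integral by Tonelli, using the classical Poisson--gamma identity $\sum_{j>n}e^{-\lambda}\lambda^j/j!=\frac1{n!}\int_0^\lambda t^n e^{-t}\,dt$, and swapping the two integrals by Tonelli once more. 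Setting $I_n:=\int_0^\infty t^n e^{-t}\overline F(t)\,dt$, this already gives $\dfrac{1-F_M(n+k)}{1-F_M(n)}=\dfrac{n!}{(n+k)!}\cdot\dfrac{I_{n+k}}{I_n}$.

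Next I would pin down the asymptotics of $\overline F$ and of $I_n$. From equation (\ref{gamma_type}) one has $f(x)=C(x)x^\alpha e^{-\beta x}(1+o(1))$; substituting $t=x+s$ in $\overline F(x)=\int_x^\infty f$, the pointwise limits $C(x+s)/C(x)\to1$ (exactly the Karamata-representation argument used in the proof of Proposition \ref{prop1}) and $((x+s)/x)^\alpha\to1$, together with a dominating function coming from Potter's bounds on the slowly varying $C$, yield by dominated convergence $\overline F(x)\sim\beta^{-1}C(x)x^\alpha e^{-\beta x}$. Inserting this into $I_n$ and using that the mass of $t^n e^{-t}$ concentrates near $t\approx n\to\infty$, the factor $\overline F(t)$ may be replaced by its equivalent, so $I_n\sim\beta^{-1}\int_0^\infty C(t)\,t^{n+\alpha}e^{-(1+\beta)t}\,dt$; the substitution $u=(1+\beta)t$ and the standard estimate $\int_0^\infty\psi(u)u^m e^{-u}\,du\sim\Gamma(m+1)\psi(m)$ for slowly varying $\psi$ (again Potter's bounds plus the concentration of a $\mathrm{Gamma}(m+1,1)$ variable around $m$) then give
$$I_n\;\sim\;\frac{\Gamma(n+\alpha+1)}{\beta\,(1+\beta)^{\,n+\alpha+1}}\;C\!\left(\frac{n}{1+\beta}\right).$$

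Finally I would substitute this estimate into the ratio above: the exponential factors contribute exactly $(1+\beta)^{-k}$; the ratio $\frac{\Gamma(n+1)}{\Gamma(n+k+1)}\cdot\frac{\Gamma(n+k+\alpha+1)}{\Gamma(n+\alpha+1)}\to1$ (Stirling, or $\Gamma(x+a)/\Gamma(x)\sim x^a$); and $C\!\left(\frac{n+k}{1+\beta}\right)\big/C\!\left(\frac{n}{1+\beta}\right)\to1$ by slow variation, so that $\lim_{n\to\infty}\frac{1-F_M(n+k)}{1-F_M(n)}=(1+\beta)^{-k}$, which lies in $(0,1)$ since $\beta>0$, with the case $k=1$ giving the failure of the long-tailed property. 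The main obstacle is the rigorous transfer of the asymptotic equivalences for $f$ and $\overline F$ through the integrals defining $I_n$: one has to show the non-peak region of the integrand is negligible and that the slowly varying $C$ is effectively constant over the $O(\sqrt n)$-wide peak at $t\approx n/(1+\beta)$, which is precisely where the Karamata representation and Potter's bounds do the work. (Alternatively one may prove only the $k=1$ case and deduce general $k$ by composing limits via $I_{n+k}/I_n=\prod_{i=0}^{k-1}I_{n+i+1}/I_{n+i}$.)
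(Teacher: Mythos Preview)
Your argument is correct but follows a genuinely different route from the paper. The paper's proof is much shorter: it simply quotes \citeauthor{Willmot}'s asymptotic for the probability mass function,
\[
\lim_{n\to\infty}\frac{P_M(n)}{C(n)\,n^{\alpha}\,(1+\beta)^{-(n+\alpha+1)}}=1,
\]
computes the ratio of successive differences $\dfrac{\overline F_M(n+k+1)-\overline F_M(n+k)}{\overline F_M(n+1)-\overline F_M(n)}=\dfrac{P_M(n+k+1)}{P_M(n+1)}\to(1+\beta)^{-k}$ (using only $C(n+k+1)/C(n+1)\to1$, exactly as in Proposition~\ref{prop1}), and then invokes the Stolz--Ces\`aro theorem to pass from the ratio of differences to the ratio of the survival functions themselves.

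By contrast, you bypass \citeauthor{Willmot}'s result entirely and work directly with the survival function through the identity $1-F_M(n)=\tfrac{1}{n!}\int_0^\infty t^n e^{-t}\overline F(t)\,dt$, then extract the asymptotics of these integrals via a Laplace/Karamata analysis. In effect your computation \emph{re-derives} a survival-function analogue of Willmot's estimate, namely $1-F_M(n)\sim\beta^{-1}C(n)\,n^{\alpha}(1+\beta)^{-(n+\alpha+1)}$, and then takes the ratio. The trade-off is clear: the paper's proof is a two-line reduction to a known pmf asymptotic plus Stolz--Ces\`aro, while yours is self-contained and makes the mechanism (concentration of $t^n e^{-(1+\beta)t}$ near $t\approx n/(1+\beta)$, slow variation of $C$) fully explicit, at the price of the heavier dominated-convergence/Potter-bound bookkeeping you flag at the end. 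Both approaches ultimately rely on the same property of $C$ that you cite from Proposition~\ref{prop1}.
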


\begin{proof}
Let $P_M$ and $\overline{F}_M$ be the probability and survival functions of a Poisson mixture using a gamma type mixing distribution, then \cite{Willmot} showed that $$\lim_{n \to \infty} \frac{P_M(n)}{C(n) n^\alpha (1+\beta)^{-(n+\alpha +1)}} = 1.$$ 
    Using this result for integer $k$, we obtain
    \begin{align*}
       \lim_{n\to \infty} \frac{\overline{F}_M(n+k+1) - \overline{F}_M(n+k)}{\overline{F}_M(n+1) - \overline{F}_M(n)} &= \lim_{n \to \infty} \frac{P_M(n+k+1)}{P_M(n+1)}\\
        &= \left(\frac{1}{1+\beta}\right)^k \lim_{n \to \infty} \frac{C(n+k+1)}{C(n+1)},
    \end{align*}
    where the last limit converges to $1$ using a similar proof as in Proposition 1. Because $\overline{F}_M$ is monotonically decreasing, the proof can be concluded by applying the Stolz-Cesàro theorem.
\end{proof}\bigskip

\noindent This result allows to characterize a third category: the \textbf{pseudo-Gumbel}. It includes a broad class of mixing distributions among others gamma, gamma/Gompertz, exponential, exponential logarithmic, inverse-Gaussian and its generalization.
\citeauthor{Perline}'s result and  Theorem \ref{thm::valiquette} lead to consider now three categories for Poisson mixtures allowing the clarification of the mixing distribution choices.
For examples, see Table \ref{tab:categories} and the supplementary materials for details.
Additionally, Theorem \ref{thm::valiquette} also quantifies how 'close' those Poisson mixtures are with the quantity $(1 + \beta)^{-1}$ involved in the limit.
Indeed, if $\beta \to 0$, $\frac{1-F_M(n+1)}{1-F_M(n)} \to 1$, i.e. it approaches a long tailed distribution.
Such property can blur the distinction between Gumbel and pseudo-Gumbel for some Poisson mixtures.

\begin{table}[!ht]
\resizebox{\textwidth}{!}{
\begin{tabular}{ |c|c|c| }
 \hline
Mixing ($\lambda$) &  Poisson mixture ($P_M)$ & Category \\ 
 \hline
 Fréchet($a$, $\sigma$)  & Poisson-Fréchet & Fréchet\\
Folded-Cauchy($\mu$, $\sigma$) & Poisson-folded-Cauchy & Fréchet\\
 Inverse-gamma($a$, $b$) & Poisson-inverse-gamma & Fréchet\\
 Beta-II($a$, $b$) & Poisson-beta-II & Fréchet \\
 Gamma/Beta-II-mixture(r,a,b) & Generalized Waring & Fréchet \\
 \hline
 Lognormal($\mu$, $\sigma$)  & Poisson-lognormal & Gumbel\\
 Weibull($a$, $b$) & Poisson-Weibull & Gumbel (if $a < 0.5$)\\
 Benktander-I($a$, $b$) & Poisson-Benktander-I & Gumbel\\
 Benktander-II($a$, $b$) & Poisson-Benktander-II & Gumbel (if $b < 0.5$)\\
 \hline
 Exponential($a$) & Geometric & Pseudo-Gumbel\\
 Gamma($a$, $b$) & Negative binomial & Pseudo-Gumbel\\
 Inverse-Gaussian($\mu$, $\sigma$) & Sichel & Pseudo-Gumbel\\
 Generalized inverse-Gaussian($a$, $b$, $p$) & PGIG & Pseudo-Gumbel\\
 \hline
\end{tabular}}
\caption{Examples of Poisson mixture and associated  categories}
\label{tab:categories}
\end{table}\bigskip

\subsection{Impact of mixing distribution choice on goodness of fit}

Are the categories previously defined  useful to distinguish how the Poisson mixtures behaved? 
How can these categories be efficiently used when it comes to model selection? 
To answer these questions, we simulated 100 samples of different Poisson mixtures with size $n=250$ using the gamma, Fréchet and lognormal distributions on $\lambda$, each one being a representative of the three categories.
For each sample, the Poisson mixture is fitted with the same three distributions and the best model is kept using a Bayesian framework.
This can be done with the \texttt{rstan} package for the language \texttt{R} \citep{Stan} to estimate the parameters by MCMC.
The best model is then kept using the highest \textit{posterior} model probability.
Those probabilities are approximated using the bridge sampling computational technique \citep{Meng} and the dedicated \texttt{R} package \texttt{Bridgesampling} \citep{Bridge}.
All results are based on the following priors: a $\mathrm{gamma}(1,1)$ distribution for positive parameters and a $\mathrm{Normal}(0,1)$ for real parameters. 
Moreover, we simulated for each sample 4 MCMC with 10000 iterations each in order to ensure reasonable convergence for the  parameter estimations and for the \textit{posterior} model probabilities.
Results are presented in Table \ref{tab:} and, in general, the most selected model stood out.
The only exception being the gamma(2,2) where the lognormal and the gamma Poisson mixtures are evenly selected throughout the simulations. 

\begin{table}[!ht]
\centering
\begin{tabular}{ |c||c|c|c| }
 \hline
Poisson mixture &  Fréchet & Lognormal & Gamma \\
\hline
Fréchet(1,1) & \bf{95} & 5 & 0\\
Fréchet(2,1) & \bf{77} & 17 & 6\\
\hline
Lognormal(0,1) & 16 & \bf{63} & 21\\
Lognormal(1,1) & 6 & \bf{86} & 8\\
\hline
Gamma(2,1) & 0 & 23 & \bf{77}\\
Gamma(2,2) & 1 & 47 & \bf{52}\\ 
\hline
\end{tabular}
\caption{Selected model frequencies for each Poisson mixture simulation with the highest frequency in bold.}
\label{tab:}
\end{table}\bigskip

\pagebreak

This example shows the importance in the comparison of various mixing distributions.
However, this approach based on systematic comparisons may suffer from computational limitations and is difficult to used in practice where a selection objective involves too much possibilities for the mixing distribution.
Indeed, such an approach requires an appropriate choice of priors, a high number of MCMC iterations, and a study of convergence for each mixing distribution.
Here, we assume the same priors and the same number of iterations for each mixing distribution in order to systematically simulate all our categories.
In reality, each case must be studied with care and such an approach depends too much on the latent variables of the mixture model.
As an alternative, we propose a simple strategy that uses directly the data and allows the user to focus on a specific family of mixing distributions.
In doing so, the estimation of the latent variable can be done as a last step.
The proposed alternative (see Section \ref{Section_Strategy}) relies on the use of a sequential approach by selecting first the most appropriate category (Gumbel, Fréchet or pseudo-Gumbel) and next by comparing only a few representative distributions belonging to this selected domain. For instance, such representative distributions are lognormal/Benktander-I for the Gumbel category, inverse-gamma/folded-Cauchy for the Fréchet category or a gamma/inverse-Gaussian for the pseudo-Gumbel case.

\section{Strategy}\label{Section_Strategy}

This section proposes a strategy to choose a mixing distribution on $\lambda$ using the categories defined in Section \ref{Section_Mixture}.
As previously mentioned, an excess of zeroes and extreme values create overdispersion in count data which induces a particular tail behavior. 
The main idea is to choose mixing distributions among the three categories reflecting which ones best fit the empirical tail behavior.
Peaks-over-threshold (POT) method \citep{Coles} is well adapted for this purpose.
This technique analyses the distribution of the excesses defined by $Y - u|Y > u$. 
\cite{Pickands}, \cite{Balkema} showed that $Y$ belongs to a domain of attraction if and only if the distribution of the excesses can be uniformly approached by a generalized Pareto distribution (GPD) as $u$ tends to the right endpoint of the distribution of $Y$. The corresponding cdf is given by
\begin{equation}
 {H}_{\gamma, \sigma}(y) =
    \begin{cases}
     1- \left(1 + \gamma \frac{y}{\sigma} \right)^{-1/\gamma} & \text{if $\gamma \neq 0$}\\
     1- \exp\left( -\frac{y}{\sigma}  \right) & \text{otherwise}
    \end{cases}       
\end{equation}
with support $\mathbb{R}_+$ if $\gamma \geq 0$ or $\left[0; -\frac{\sigma}{\gamma} \right]$ if $\gamma < 0$ and where $\gamma \in \mathbb{R}$ and $\sigma > 0$ are respectively shape and scale parameters. The sign of the
$\gamma$ parameter is intrinsically related to the domain of attraction.
Indeed, the distribution of $Y$ belongs to $\mathcal{D}_-$, $\mathcal{D}_0$ or $\mathcal{D}_+$ if $\gamma < 0$, $\gamma = 0$ or $\gamma > 0$ respectively.
Therefore, fitting a GPD to the excesses of the count data can inform  whether or not Poisson mixture distribution belongs to a known domain of attraction and, if so, which one.

\subsection{Decision tree}

Suppose  overdispersed count data for which we need to fit a Poisson mixture. Our strategy to select appropriate mixing distributions is based on a decision tree (Figure \ref{fig:tree})  leading to the three categories defined in Section \ref{Section_Mixture}.
The first step consists in the selection of a threshold $u$ large enough for the data and to fit a GPD to the excesses.
The choice of $u$ can either be based on empirical quantile or on studying the mean residual life plot \citep{Threshold} and the GPD parameters can be efficiently estimated using maximum likelihood \citep{Coles}.\bigskip

\begin{figure}[H]
    \centering
    \includegraphics[width=\textwidth]{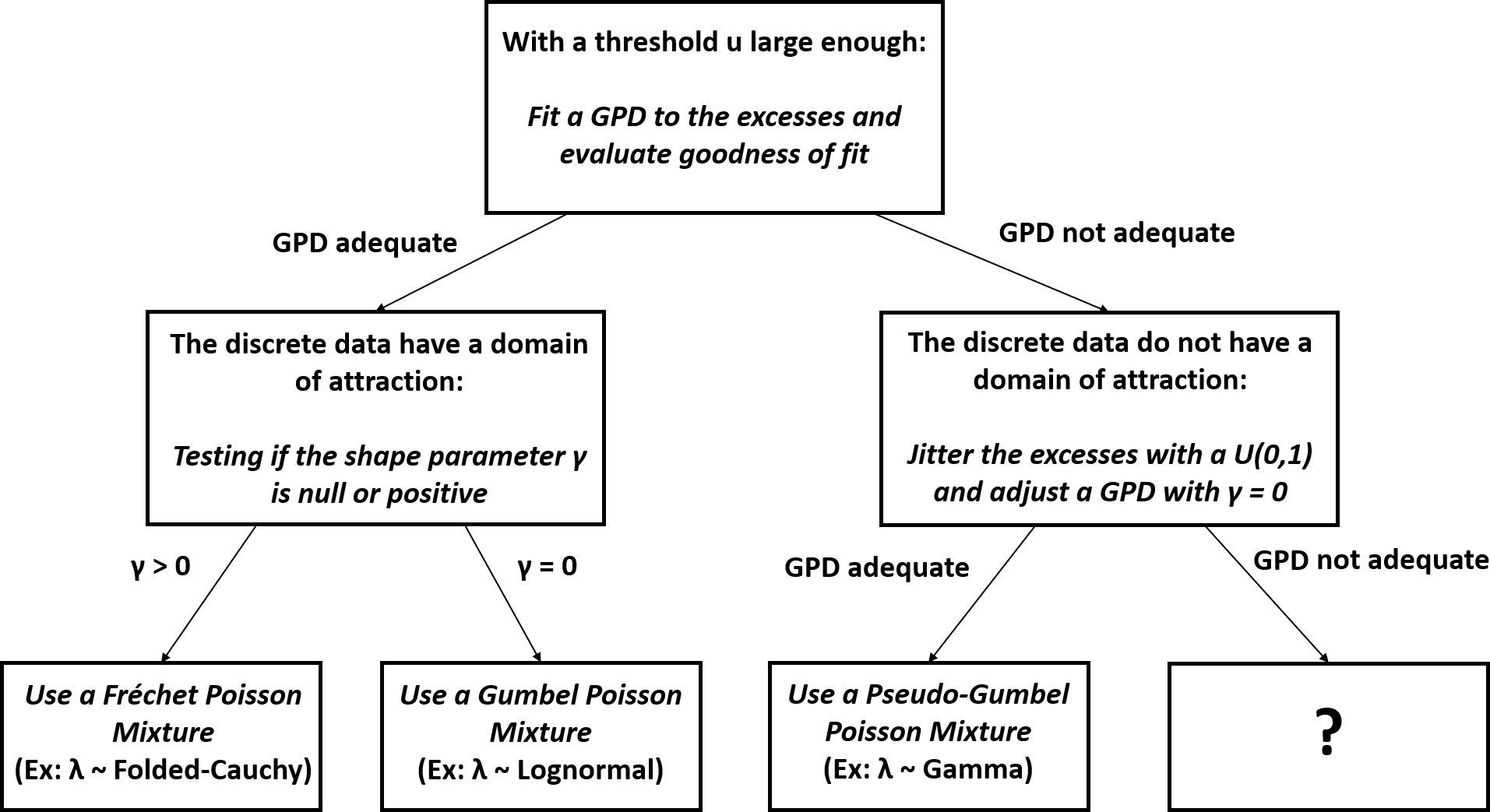}
    \caption{Decision tree for Poisson mixtures}
    \label{fig:tree}
\end{figure}

Two situations arise: first, if excesses are correctly fitted using the GPD model (left side in Figure~\ref{fig:tree}), then we propose to use a  distribution belonging to Gumbel or Fréchet such that the resulting Poisson mixture distribution remains in these domains.
Indeed, if the excesses can be approximated by a GPD, then the Poisson mixture must be in a domain of attraction (see \cite{Pickands}, \cite{Balkema}).
Therefore, the categories defined by \cite{Perline} are an adequate set of mixing distributions to choose from.
The choice of the specific category is directly based on the estimation of the shape parameter obtained at the first step.
Testing whether $\gamma = 0$ or not can be done with the deviance statistic \citep{Coles}.
If it is the case, we should use a Poisson mixture in the Gumbel category.
Else if $\gamma$ is positive, the Fréchet category should be prioritize. Otherwise, if $\gamma$ is negative, this strategy cannot assess which mixing distribution should be used.
However, no such case has been relevant in our study.
\bigskip

The second situation (right side in Figure~\ref{fig:tree}) corresponds to the case where GPD model is not well adapted. In this case, any mixing distributions such that the Poisson mixture belongs to Gumbel or Fréchet domains of attraction should be avoided. A distribution from the pseudo-Gumbel category should potentially be favoured.
As demonstrated by \citet{Shimura}, such discrete random variables originate from an unique continuous distribution in $\mathcal{D}_0$ that has been discretized.
That is why we transform the excesses to continuous values thanks to a \textit{jittering} technique consisting in the addition of a continuous random noise \citep{Nagler}. 
For an application, \cite{Coeurjolly-Rousseau} used this technique to study the Poisson's median and to construct an estimator for $\lambda$.
In our case, the excesses $Y-u|Y > u$ are discrete and greater or equal to $1$.
However the GPD with $\gamma \geq 0$ is defined on $(0, \infty)$.
To transform the excesses into the same support, we jittered by subtracting a $\mathrm{Uniform(0,1)}$.
With these ``jittered`` data points, we fit a GPD again by fixing $\gamma = 0$ and testing again if it is adequate.
If, in this case, the fit is adequate, we consider that the data are pseudo-Gumbel. 
Otherwise, one should proceed to another approach in order to choose a mixing distribution.
However, we rarely encountered this situation in our simulations.

\subsection{Evaluation of a sequential approach for mixing distribution selection}

In order to study the performance of the proposed strategy, various Poisson mixtures samples have been simulated.
The decision tree is then systematically applied using
the \texttt{evd} package \citep{EVD} for maximum likelihood estimation of GPD parameters, the modified Anderson Darling test for the goodness-of-fit and deviance statistic for testing the nullity of the shape parameter.

Let $X_1, \dots, X_m$ denote $m$ i.i.d random variables ordered as $X_{(1)} \leq \dots \leq X_{(m)}$.
The modified Anderson-Darling test statistic for a distribution $H$ is defined by
$$T(X_1, \dots, X_m) = \frac{n}{2} - 2\sum_{i=1}^m H(X_{(i)}) - \sum_{i=1}^m\left[ 2 - \frac{2i - 1}{n} \right] \log(1-H(X_{(i)})). $$ 
As presented in \cite{GPD_Review}, this statistic has an asymptotic distribution defined by a weighted sum of $\chi_1^2$.
For this simulation study, the $m$ random variables are the excesses and $H$ is the GPD.
We point that such a test works for any distribution $H$.
However, some tests exist specifically when $H$ is a GPD.
See \cite{Toulemonde} or \cite{Villasenor-Gonzalez} for examples.

\pagebreak

Finally, to test $H_0:\gamma = 0$ versus $H_1:\gamma\neq 0$, we fit the two models, that is the complete one and the restricted one, evaluate the corresponding  log likelihoods namely $\mathcal{L}_1$ and $\mathcal{L}_0$ and we conclude with  the deviance statistic $D = 2\left(\mathcal{L}_1 - \mathcal{L}_0 \right)$ which follows a $\chi_1^2$ under suitable conditions \citep{Coles}. \\

The following simulation scheme is then applied in the language \texttt{R}:

\begin{enumerate}
    \item For a fixed sample size $n$ and a Poisson mixture $F_M$ with fixed parameters, simulate the mixed Poisson observations $\mathbf{Y} :=\left(Y_1, \dots, Y_n\right)$.
    \item For a threshold $u$ based on the sample $\mathbf{Y}$ (example: $95\mathrm{th}$ quantile), get the excesses $\mathbf{X} := \mathbf{Y} - u | \mathbf{Y} > u$.
    \item Calculate the MLE of $\gamma$ and $\sigma$ of the GPD for $\mathbf{X}$ using \texttt{evd::fpot} function using the Nelder-Mead optimization method. 
    \item Test the GPD for $\mathbf{X}$ with the modified Anderson Darling test ($\alpha = 0.05$). 
    The p-values are calculated with a bootstrap approach using $250$ iterations (see \cite{GPD_Review}).
    \item Evaluate which category the sample is classified to using the decision tree with the following outcomes:
    \begin{enumerate}
        \item  If the test at step 4 for $\mathbf{X}$ is not rejected, use the deviance statistic to find which domain of attraction $\mathbf{X}$ belongs to. 
        If $\gamma < 0$ is significant, the sample fails to have a category.
        \item Else repeat steps 3 and 4 for the jittered excesses $\mathbf{X}^c := \mathbf{X} - \mathrm{Unif}(0,1)$ and by fixing $\gamma = 0$.
        If the GPD is not rejected, the sample belongs to pseudo-Gumbel category. 
        Otherwise the sample fails to have a category.
    \end{enumerate}
    \item Repeat 1000 times the steps 1 to 5.
\end{enumerate}

Distributions from the three categories are tested using these steps with $n$ equal to $1000$ or $2000$ and the threshold fixed at the $95 \mathrm{th}$ and $97.5 \mathrm{th}$ empirical quantiles for $n = 1000$ and  we add the $98.5\mathrm{th}$ empirical quantile for $n = 2000$.
For the Fréchet category, Fréchet and folded-Cauchy mixing distributions are simulated.
For the Gumbel category, lognormal and Weibull mixing distributions have been simulated.
Finally, gamma and inverse-Gaussian mixing distributions are simulated for the pseudo-Gumbel.
Results are presented in Table 3 and also in the Supplementary Materials section (other sets of parameters, different sample sizes and different thresholds).

\begin{table}[H]
\centering
\resizebox{0.8\textwidth}{!}{
\begin{tabular}{|c|c|c|c|}
\hline
 Mixing distribution & Average excesses & GPD Rejection & Category \\
\hline
Fréchet(1,1) & 48.727 & 0.069 & 0.917 (Fréchet)\\
Folded-Cauchy(0,1) & 48.243 & 0.078 & 0.896 (Fréchet)\\
\hline
Lognormal(1,1) & 46.750 & 0.126 & 0.720 (Gumbel)\\
Weibull(0.5, 1) & 46.246 & 0.133 & 0.754 (Gumbel)\\
\hline
Gamma(2,1) & 36.200 & 0.704 & 0.635 (Pseudo-Gumbel)\\
Inverse-Gaussian(1,2) & 38.977 & 0.856 & 0.709 (Pseudo-Gumbel)\\
\hline
\end{tabular}
}
\label{tab:sim}
\caption{Average number of excesses, sample proportion of GPD rejection and of the most frequent category for the simulations with $n = 1000$ and $u = 95\mathrm{th}$ empirical quantile}
\end{table}

These simulations aim to assess whether the decision tree adequately identifies the Poisson mixture categories.
To do so, we calculate the proportion of samples where the GPD is rejected in the first branch and the proportion of the most frequent category where the Poisson mixture is identified.
For cases like the Gumbel and Fréchet categories, we should see a low GPD rejection frequency.
Conversely, the pseudo-Gumbel category should have a high GPD rejection.
For all cases, we should have a high proportion of samples adequately identified to their category.\bigskip

As presented in Table 3, most of the Poisson mixtures simulated can be adequately identified to the appropriate category. 
For the mixing distributions Fréchet(1,1), folded-Cauchy(0,1), lognormal(1,1) and Weibull(0.5,1), the GPD is mostly adequate for the excesses due to its low rejection rate.
For pseudo-Gumbel mixtures Gamma(2,1) and inverse-Gaussian(1,2), the GPD is mostly rejected and, once the excesses are jittered, the Gumbel domain is found.
Moreover, the simulation results considering a Weibull($a$,$b$) as a mixing distribution are relevant with the theory.
Indeed, as mentioned in Table 1, the Poisson-Weibull is in the Gumbel category if its parameter $a$ is smaller than $0.5$.
In comparison to the limit case Weibull(0.5, 1) in Table 3, we also tested the Weibull(1,1) and found that this mixture does not belong to Gumbel category (Table 2 Supp. Material).\bigskip

Some interesting factors have been identified concerning the categorization.
First, as noted by \cite{Hitz}, the discrete excesses need a certain amount of variability in order to have a smooth adjustment to the GPD.
If the variance is not high enough, it can be difficult in practice to adequately identify the mixture category.
For example, based on the simulations, the Fréchet(2,1) Poisson mixture belongs to the Fréchet domain only after the excesses are jittered (Table 1 Supp. Material).
This distribution has its expectation defined which results in a more stable sample compared to the Fréchet(1,1). 
Here, both distributions do not have a defined variance because their parameter $a \leq 2$.
Still, because the Fréchet(2,1) is the limit case, it leads to a less volatile sample of excesses.
This situation is identified in the lognormal case as well.
Indeed, the Poisson mixture using the lognormal(0,1) has a smaller variance compare to mixture with a lognormal(1,1).
In our simulations, the former case has some difficulty to be adequately identified to the Gumbel category compared to the latter (Table 2 Supp. Material).\bigskip

Another important aspect is the choice of the threshold $u$.
Indeed, the threshold affects the variance and the domain of attraction inferred by the data.
For example, the lognormal(0,1) may have difficulties to be identified due to its variance, but once the $u$ is large enough we do find the Gumbel category.
For instance when $n = 1000$ and $u = 95\mathrm{th}$ quantile, 257 over 1000 samples are classified to the Gumbel category compared to 476 over 1000 samples in the pseudo-Gumbel category.  
However, for the same $n$ and $u = 97.5\mathrm{th}$, 790 over 1000 samples are classified to the Gumbel category compared to 64 over 1000 samples in the pseudo-Gumbel category.
Therefore a larger threshold $u$ is necessary in this case. 
However, it can be too large for some distributions like the gamma.
In particular, when $n = 1000$ and $u = 95\mathrm{th}$ quantile there should be in average $50$ excesses, but Table 3 indicates that the gamma(2,1) has $36.2$ excesses.
This can be explained by the lack of different values in the right tail of the Poisson mixture which leads to an underrepresented sample of excesses.
Such discrepancy greatly affect the categorisation.
For example, with $n = 2000$ and $u = 98.5\mathrm{th}$ quantile, the gamma(2,1) has in average $21.927$ excesses and are mostly classified in the Gumbel domain (Table 3 Supp. Material).
Clearly this  inappropriate classification is due to the lack of excesses.\bigskip

Finally, some pseudo-Gumbel Poisson mixtures can be very close to the Gumbel domain. 
For instance, the inverse-Gaussian$(1,2)$ is adequately classified, but the inverse-Gaussian$(2,1)$ is mostly classified in the Gumbel category (Table 3 Supp. Material).
This can be explained by the 'closeness' property described in Theorem 1.
Indeed, the density of a inverse-Gaussian$(\mu,\sigma)$ can be represented by

$$f(x) = C(x) x^{-3/2} \exp\left(-\frac{\sigma}{2\mu^2} x\right)$$
where $C(x)= C \exp\left(-\frac{\sigma}{2x}\right)$ with $C$ the normalizing constant.
By equation (3), $\beta = \sigma/2\mu^2$ and substituting the values $\mu = 2$ and $\sigma = 1$ gives $\beta = 1/8$. Therefore the limit defined in Theorem 1 indicates that the resulting Poisson mixture has
$$\lim_{n\to \infty} \frac{1-F_M(n+1)}{1-F_M(n)} = \frac{8}{9}.$$
Because this limit is pretty close to that of a long tailed distribution, i.e. the limit is near to $1$, the distinction between pseudo-Gumbel and Gumbel gets blurred.
To visualize how this 'closeness' affects the fit, let the parameter $\mu = 2$ be fixed and vary $\sigma$ from $0.1$ to $8$ for the inverse-Gaussian.
For each value of $\sigma$, simulate 500 samples of size $n=2000$ from the Poisson mixture, fix the threshold $u$ to the $97.5 \mathrm{th}$ empirical quantile and calculate the proportion of samples where the GPD is rejected with p-value $\alpha = 0.05$.
In theory, the limit from Theorem 1 should approach $0$ when $\sigma$ gets larger, which implies getting further from the Gumbel domain and results into more rejection of the GPD. 
This result is reflected in Figure \ref{fig:reject} when $\sigma$ approaches $8$.

\begin{figure}[H]
    \centering
    \includegraphics[width=0.65\textwidth]{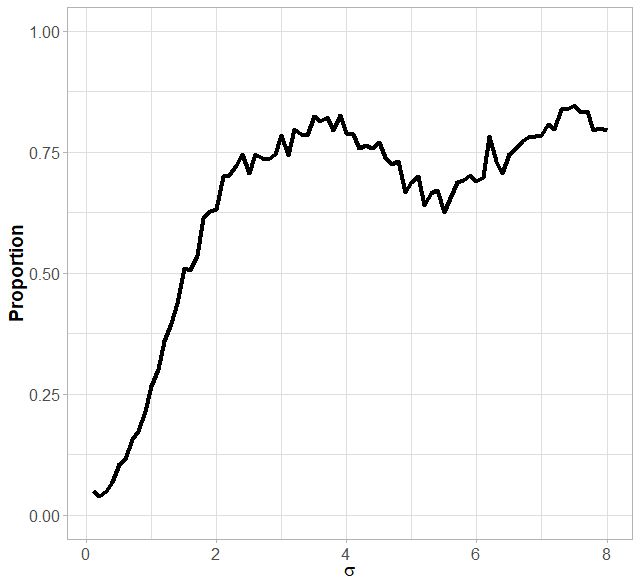}
    \caption{Proportion of inverse-Gaussian($2$, $\sigma$) Poisson mixture samples (size $n=2000$) where the GPD has been rejected ($\alpha = 0.05$) for the excesses ($u = 97.5 \mathrm{th}$ quantile) according to $\sigma$.}
    \label{fig:reject}
\end{figure}

\section{Conclusion and perspectives}
Overdispersed count data are commonly observed in many applied fields and Poisson mixtures are appealing to model such data \citep{Karlis}. 
However, the choice of the appropriate mixing distribution is a difficult task relying mainly on empirical approaches related to modelers subjectivity or on intensive computational techniques combined with goodness-of-fit test or information criteria. 
In this paper, we proposed a new strategy based on the analysis of the tail behavior of the data.
We extend the usual Gumbel and Fréchet domains of attraction introducing the pseudo-Gumbel category for Poisson count data.
In particular, we show how tail behavior can provide a great amount of information to evaluate the mixing distributions.
Based on a sequential strategy and decision tree, we proposed a useful and efficient approach to select the most appropriate category allowing to focus on a more restrictive set of potential candidates.
The choice of the most appropriate distribution within a given category is not dealt with in this paper.
Some strategies can be proposed helping the choice of such potential candidates.
More specifically, it could be based on the simplicity either for the inferential step or for the inclusion of covariates or yet for biological interpretations.
Moreover, recently, tremendous researches have been developed to jointly model count data.
For instance, the joint species distribution models are proposed extending classical species distribution models in ecology \citep{JSDM} and are often based on the use of the multivariate lognormal distribution \citep{Aitchison,Robin}.
Based on our approach, various and flexible models could be developed combining different mixing distribution belonging to different categories (Gumbel, Fréchet or pseudo-Gumbel) and the use of copulas to model dependencies structures between continuous mixing distributions \citep{Nelsen}.

\section*{Acknowledgments}

This research was supported by the GAMBAS project funded by the French National Research Agency (ANR-18-CE02-0025) and the french national programme LEFE/INSU.
We also thank Éric Marchand for the helpful comments and fruitful discussions.

\bibliography{biblio}

\end{document}